\documentclass[12pt]{article}

\usepackage{amssymb}
\usepackage{url}
\usepackage{pdfpages}
\usepackage{amsthm}
\usepackage{amsmath}
\usepackage{graphicx}
\usepackage{fullpage}
\usepackage{color}
\usepackage{enumerate}
 \numberwithin{equation}{section}
\usepackage{booktabs}
\allowdisplaybreaks

\usepackage[pdftex]{hyperref}
 \usepackage{mathtools}
 \mathtoolsset{showonlyrefs}

\theoremstyle{plain}
\newtheorem{thm}{Theorem}[section]
\newtheorem*{theorem*}{Theorem}
\newtheorem{cor}[thm]{Corollary}

\newtheorem{lem}[thm]{Lemma}
\newtheorem{prop}[thm]{Proposition}

\newtheorem*{claim*}{Claim}

\theoremstyle{definition}
\newtheorem{defn}[thm]{Definition}

\theoremstyle{remark}

\newtheorem{rem}[thm]{Remark}

\newcommand{\N}{\mathbb{N}}
\newcommand{\R}{\mathbb{R}}



\newcommand{\bp}{\begin{proof}[\ensuremath{\mathbf{Proof}}]}
\newcommand{\bs}{\begin{proof}[\ensuremath{\mathbf{Solution}}]}
\newcommand{\ep}{\end{proof}}
\newcommand{\be}{\begin{equation}}
\newcommand{\ee}{\end{equation}}

\begin{document}

\title{Probability measures on the path space \\ and the sticky particle system}

\author{Ryan Hynd\footnote{Department of Mathematics, University of Pennsylvania.  Partially supported by NSF grant DMS-1554130.}}

\maketitle 

\begin{abstract}
We study collections of point masses which move freely along the real line and stick together when they collide via perfectly inelastic collisions.  We quantify the way particles stick together and explain how to associate a probability measure on the space of continuous paths to such a collection of evolving point masses.  These observations lead to a new method of designing solutions to the sticky particle system in one spatial dimension which have nonincreasing kinetic energy and satisfy an entropy inequality. 
\end{abstract}

\section{Introduction}
The sticky particle system (SPS) is a system of PDE that governs the dynamics of a collection of 
particles that move freely in $\R$ and interact only via perfectly inelastic collisions.  Using 
$\rho$ to denote the density of particles and $v$ as an associated velocity field, the SPS is comprised of the {\it conservation of mass} 
\begin{equation}\label{ConsMass}
\partial_t \rho + \partial_x(\rho v)=0
\end{equation}
together with the {\it conservation of momentum} 
\begin{equation}\label{ConsMom}
\partial_t(\rho v) + \partial_x(\rho v^2)=0.
\end{equation}
Both of these equations hold in $\R\times (0,\infty)$.  The SPS was first considered in three spatial dimensions by Zel'dovich in a model for the expansion of matter without pressure \cite{Zeldovich}. While this theory stimulated a lot of interest in the astronomy community, there is still much to be understood about solutions of the SPS even just in one spatial dimension.

\par  One of the fundamental problems regarding the SPS is to find a solution that satisfies a given set of initial conditions. Experience has shown that it makes sense to study this problem aided with the concept of a weak solution. In particular, our examples below show that the density $\rho$ will typically be measure--valued and the local velocity $v$ will be discontinuous.  As we expect the total mass to be conserved, it makes sense for us to consider the space  ${\cal P}(\R)$ of Borel probability measures on $\R$.  We recall this space has a natural topology: $(\sigma_k)_{k\in \N}\subset {\cal P}(\R)$ converges {\it narrowly} to $\sigma\in {\cal P}(\R)$ if 
$$
\lim_{k\rightarrow\infty}\int_{\R}gd\sigma_k=\int_{\R}gd\sigma
$$
for each $g$ belonging to the space $C_b(\R)$ of bounded continuous functions on $\R$.

\begin{defn}\label{weakSolnDefn}  Suppose $\rho_0\in {\cal P}(\R)$ and $v_0: \R\rightarrow \R$ is continuous. A narrowly continuous $\rho: (0,\infty)\rightarrow {\cal P}(\R); t\mapsto \rho_t$ and Borel measurable $v:\R\times (0,\infty)\rightarrow \R$ is a {\it weak solution pair} of the SPS with initial conditions 
\be\label{Init}
\left.\rho\right|_{t=0}=\rho_0\quad \text{and}\quad \left.v\right|_{t=0}=v_0
\ee
provided 
 
\be\label{ConsMassWeak}
\int^\infty_0\int_{\R}(\partial_t\psi+v\partial_x\psi )d\rho_tdt+\int_{\R}\psi(\cdot,0)d\rho_0=0
\ee
and
\be\label{ConsMomWeak}
\int^\infty_0\int_{\R}(v\partial_t\psi+v^2\partial_x\psi)d\rho_tdt+\int_{\R}\psi(\cdot,0)v_0d\rho_0=0
\ee
for each $\psi\in C^\infty_c(\R\times[0,\infty))$.

\end{defn}

\begin{rem}\label{WeakSolnRemark} 
Conditions \eqref{ConsMassWeak} and \eqref{ConsMomWeak} are the integral formulations of the conservation of mass \eqref{ConsMass} and momentum \eqref{ConsMom}, respectively.
\end{rem}

\par In the seminal works of E, Rykov and Sinai \cite{ERykovSinai} and of Brenier and Grenier \cite{BreGre}, it was established that there is a weak solution of the SPS which satisfies given initial conditions in one spatial dimension.  Natile and Savar\'e subsequently unified and built considerably on these works \cite{NatSav}; see also the paper by Cavalletti, Sedjro and Westdickenberg \cite{MR3296602} which shortens some of the proofs in \cite{NatSav}. In addition, we mention that Huang and Wang deduced the uniqueness of weak solutions which satisfy an additional entropy condition \cite{MR1853866}, and Nguyen and Tudorascu used optimal transport methods to extend these existence and uniqueness results to a general class of initial conditions \cite{MR2438785,MR3359159}.

\par Let us denote 
$$
\Gamma:=C([0,\infty))
$$
as the space of continuous paths $\gamma:[0,\infty)\rightarrow \R$ endowed with the topology of local uniform convergence.
In this paper, we will reinterpret a weak solution of the SPS as a Borel probability measure $\eta$ on $\Gamma$ which we will denote by $\eta\in {\cal P}(\Gamma)$. That is, we will consider measures $\eta$ which are supported on the trajectories of particles that move freely along the real line and undergo perfectly inelastic collisions when they collide. To this end, we will employ the evaluation map 
$$
e_t:\Gamma\rightarrow \R;\gamma\mapsto \gamma(t)
$$
and the push forward measure $e_t{_\#}\eta\in {\cal P}(\R)$ defined via 
$$
\int_{\R}fd(e_t{_\#}\eta) :=\int_{\Gamma}f(\gamma(t))d\eta(\gamma)
$$
for each $t\ge 0$. 

\par The central insight of this paper is as follows. 

\begin{prop}\label{EtaThm}
Assume $\rho_0\in {\cal P}(\R)$ and $v_0:\R\rightarrow \R$ is continuous with
$$
\int_{\R}v_0^2d\rho_0<\infty.
$$
There is $\eta\in {\cal P}(\Gamma)$ which satisfies the following properties. 
\begin{enumerate}[(i)]

\item $\rho_0=e_0{_\#}\eta$.

\item For each $0<s\le t$ and $\gamma,\xi\in \textup{supp}(\eta)$,  
\be\label{QSPP}
\frac{1}{t}|\gamma(t)-\xi(t)|\le \frac{1}{s}|\gamma(s)-\xi(s)|.
\ee

\item  For $\eta$ almost every $\gamma\in \Gamma$, $\gamma: [0,\infty)\rightarrow \R$ is absolutely continuous. 

\item There is a Borel $v:\R\times(0,\infty)\rightarrow \R$ such that
$$
\dot\gamma(t)=v(\gamma(t),t)\;\; \text{a.e.}\; t>0
$$
for $\eta$ almost every $\gamma\in \Gamma$. 

\item For  almost every $0< t<\infty$ and each $h\in C_b(\R)$,
\be\label{EtaConsMom}
\int_{\Gamma}\dot\gamma(t)h(\gamma(t))d\eta(\gamma)=\int_{\Gamma}v_0(\gamma(0))h(\gamma(t))d\eta(\gamma).
\ee
 
\item For  almost every $0\le s\le t<\infty$,
$$
\int_{\Gamma}\dot\gamma(t)^2d\eta(\gamma)\le \int_{\Gamma}\dot\gamma(s)^2d\eta(\gamma)<\infty.
$$

\end{enumerate}
\end{prop}

\par We will call \eqref{QSPP} the {\it quantitative sticky particle property} as it quantifies the fact that 
\be
\gamma(s)=\xi(s)\Longrightarrow\gamma(t)=\xi(t)\;\text{for}\; t\ge s
\ee
for each $\gamma,\xi\in \text{supp}(\eta)$.  That is, once particles meet they remain stuck together thereafter. Moreover, 
\eqref{EtaConsMom} is a general interpretation of the conservation of momentum dictated by the rule of perfectly inelastic collisions among point masses.  We will see that the family of measures $\eta$ satisfying the above conditions is compact in narrow topology on ${\cal P}(\Gamma)$ and the properties above are preserved under taking limits.  We will then build an approximating sequence for a specific set of initial conditions by starting with $\rho_0$ that is a convex combination of Dirac measures.

\par Upon setting 
$$
\rho: (0,\infty)\rightarrow {\cal P}(\R); t\mapsto e_t{_\#}\eta,
$$ we will show 
that $\rho$ and $v$ from condition $(iv)$ is a weak solution pair of the SPS with initial conditions $\left.\rho\right|_{t=0}=\rho_0$ and $\left.v\right|_{t=0}=v_0$. This will be an important step in proving the following existence theorem.  As mentioned above, this result was previously  obtained \cite{BreGre, ERykovSinai, MR1853866,NatSav,MR2438785}. The novelty we offer is in our approach. 

\begin{thm}\label{ExistTheorem}
Assume $\rho_0\in {\cal P}(\R)$ and $v_0: \R \rightarrow \R$ is continuous with 
$$
\int_{\R}v_0^2d\rho_0<\infty. 
$$
There is a weak solution pair $\rho$ and $v$ of the SPS which satisfies $\rho|_{t=0}=\rho_0$ and $v|_{t=0}=v_0$,
\be
(v(x,t)-v(y,t))(x-y)\le \frac{1}{t}(x-y)^2
\ee
for almost every $t>0$ and $\rho_t$ almost every $x,y\in\R$, and 
\be
\int_{\R}\frac{1}{2}v(x,t)^2d\rho_t(x)\le \int_{\R}\frac{1}{2}v(x,s)^2d\rho_s(x)<\infty
\ee
for almost every $0\le s\le t<\infty$. 
\end{thm}

\par This approach was  inspired by the probabilistic interpretation of solutions of the continuity equation 
described in Chapter 8 of the monograph by Ambrosio, Gigli and Savar\'e \cite{AGS}.   They showed that any solution of the continuity equation can be associated with a probability measure on $\Gamma$ using a tightness argument. We were also inspired by the work of Dermoune \cite{Dermoune}, who gave a probabilistic interpretation of solutions of the SPS using a related stochastic differential equation; see also \cite{MR2418013} which extends Dermoune's approach to include discontinuous initial velocity functions.

\par This paper is organized as follows. In section \ref{prelimSection}, we consider the dynamics of finitely many point masses which move freely along the real line and interact only through perfectly inelastic collisions. We will also use the trajectories of these point masses to design $\eta$ when $\rho_0$ is a convex combination of Dirac measures. We will then take limits of these measures and prove Proposition \ref{EtaThm} in section \ref{ExistSec}.  Finally, in section \ref{SolnSec} we will show how to generate a weak solution pair of the SPS. We thank Jin Feng, Wilfrid Gangbo, Emanuel Indrei, Changyou Wang and Zhenfu Wang for engaging in insightful discussions related to this work.

\section{Sticky particle trajectories}\label{prelimSection}
In this section, we will consider $N$ point masses on the real line that move freely unless they collide.  We will further assume that when any sub-collection of these particles collide, they stick together to form a particle of larger mass and undergo a perfectly inelastic collision.   For example, if the particles with  masses $m_{1}, \dots, m_{k}$ move with the respective velocities $v_{1}, \dots, v_{k}$ before a collision, the new particle that is formed after the collision has mass $m_{1}+\dots+m_{k}$ and velocity $v$ chosen to satisfy
$$
m_{1}v_{1}+\dots+m_{k}v_{k}=(m_{1}+\dots+m_{k})v.
$$
In particular, $v$ is the mass average of the individual velocities $v_{1},\dots, v_{k}$. See Figure \ref{4InelColl}.  
\begin{figure}[h]
\centering
 \includegraphics[width=.65\textwidth]{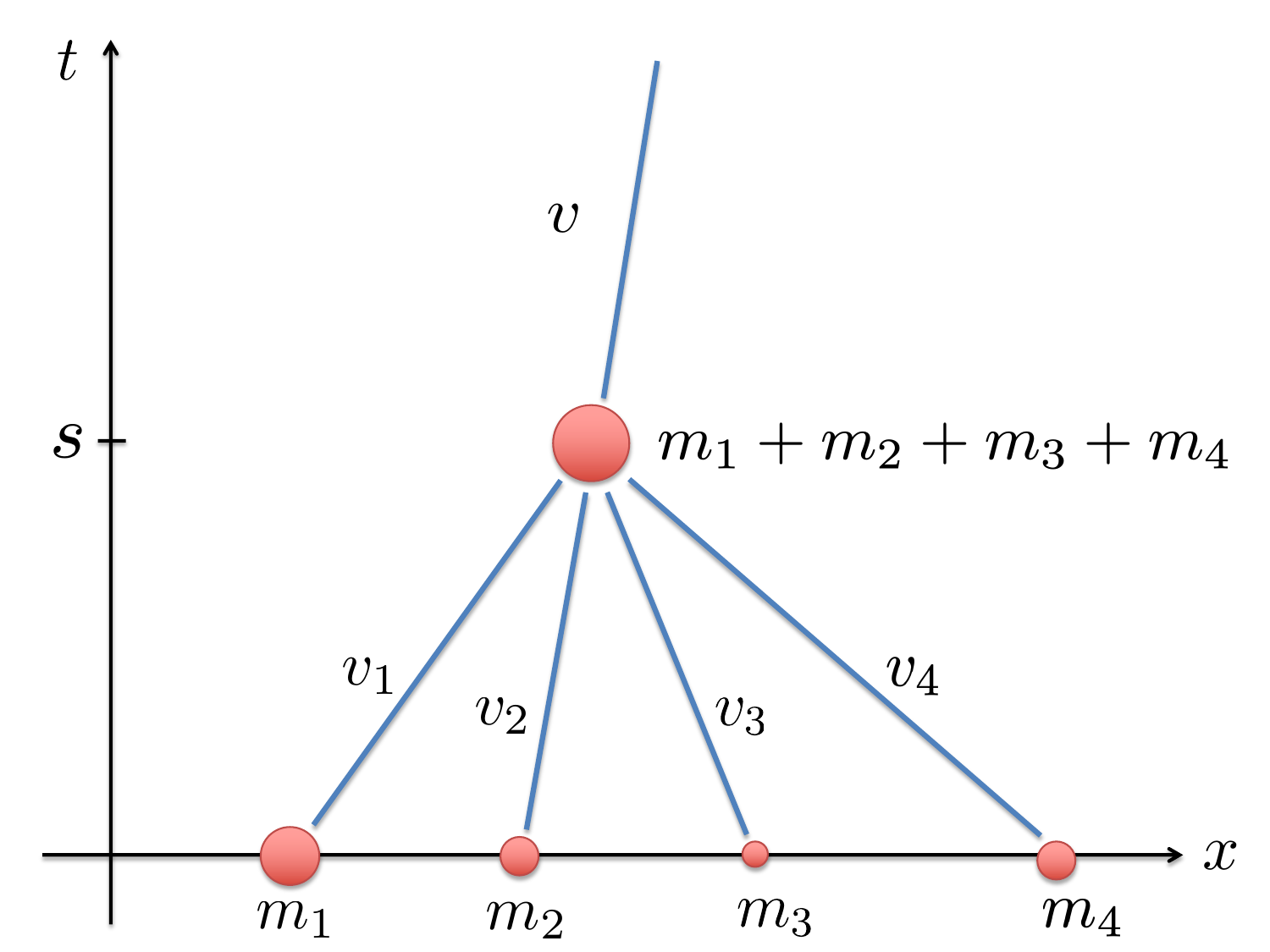}
  \caption{Four point masses which move freely along the real line and undergo a perfectly inelastic collision when they collide at time $s$. The velocity $v$ of the resulting particle satisfies $m_{1}v_{1}+m_{2}v_{2}+m_{3}v_{3}+m_{4}v_{4}=(m_{1}+m_2+m_3+m_{4})v$. Note that the particles are drawn with different sizes to emphasize that they are not assumed to have equal mass.}\label{4InelColl}
\end{figure}

\par The proposition below involves trajectories which track the positions of a collection of point masses as described above. 
\begin{prop}\label{ExistGammai} Suppose $m_1,\dots,m_N>0$, $x_1,\dots, x_N\in \R$ and $v_1,\dots, v_N\in \R$ are given. There exist piecewise linear paths
$$
\gamma_i:[0,\infty)\rightarrow\R\quad (i=1,\dots, N)
$$
with
$$
\gamma_i(0)=x_i\quad \text{and}\quad\dot\gamma_i(0+)=v_i
$$
that satisfy the following properties. 

\begin{enumerate}[(i)]

\item If $\gamma_i(s)=\gamma_j(s)$, then
\be
\gamma_i(t)=\gamma_j(t)
\ee
for $t\ge s$.

\item Whenever 
$$
\gamma_{i_1}(t)=\dots =\gamma_{i_k}(t)\neq \gamma_i(t)\;\;\text{for}\;\; i\not\in\{i_1,\dots, i_k\},
$$
then
\be\label{AveSlopeCond}
\dot\gamma_{i_j}(t+)=\frac{m_{i_1}\dot\gamma_{i_1}(t-)+\dots +m_{i_k}\dot\gamma_{i_k}(t-)}{m_{i_1}+\dots +m_{i_k}}
\ee
for $j=1,\dots, k$. 

\end{enumerate}
\end{prop}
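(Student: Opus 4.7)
The plan is to construct the trajectories $\gamma_1,\dots,\gamma_N$ by induction on \emph{collision events}, building them as piecewise linear paths segment by segment and enforcing the mass-average rule \eqref{AveSlopeCond} at the endpoints of the segments. Because each collision event strictly decreases the number of distinct positions among the particles, this procedure must terminate after finitely many events and so produce paths defined on all of $[0,\infty)$.

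In detail, I set $\tau_0 := 0$, and on $[\tau_0, \tau_1]$ I declare $\gamma_i(t) := x_i + (t - \tau_0) v_i$ for $i = 1, \dots, N$, where
$$
\tau_1 := \inf\bigl\{ t > \tau_0 : \gamma_i(t) = \gamma_j(t) \text{ for some } i \neq j \text{ with } \gamma_i(\tau_0) \neq \gamma_j(\tau_0) \bigr\}.
$$
Since any two lines with distinct starting points meet in at most one time, this infimum is taken over a finite set of strictly positive numbers, hence $\tau_1 > 0$; if the set is empty I put $\tau_1 := \infty$ and the construction terminates. Otherwise I partition $\{1, \dots, N\}$ by the equivalence relation $i \sim j \iff \gamma_i(\tau_1) = \gamma_j(\tau_1)$, and for each equivalence class $C$ I replace the velocity of every $i \in C$ by
$$
v_i^{\mathrm{new}} := \frac{\sum_{j \in C} m_j \dot\gamma_j(\tau_1 -)}{\sum_{j \in C} m_j}.
$$

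I then iterate: the process restarts at time $\tau_1$ from the positions $\gamma_i(\tau_1)$ and the updated velocities $v_i^{\mathrm{new}}$, producing the next segment up to the next collision time $\tau_2$, and so on. Since all particles in the same class at $\tau_k$ begin the next segment at a common point with a common velocity, they share the trajectory from $\tau_k$ onward; in particular the number of distinct positions strictly decreases at each event. Therefore there are at most $N - 1$ events, so $\tau_k = \infty$ for some $k \leq N$, and the $\gamma_i$ are well-defined piecewise linear functions on $[0, \infty)$.

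Finally, the stated properties are immediate from the construction. The initial values $\gamma_i(0) = x_i$ and $\dot\gamma_i(0) = v_i$ come from the first segment, the mass-average identity \eqref{AveSlopeCond} is the definition of $v_i^{\mathrm{new}}$ at each collision time, and \eqref{AveSlopeCond} is vacuous at an interior point of a segment since the path is then locally linear. The one step that needs attention is the absence of an accumulation of collision times; this is the content of the strict decrease in the number of distinct positions, and it is also where the exclusion $\gamma_i(\tau_0) \neq \gamma_j(\tau_0)$ in the definition of $\tau_1$ matters, as it prevents already merged particles from spuriously contributing further events.
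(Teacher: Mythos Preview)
Your argument is correct. The only quibble is the word ``vacuous'': at an interior point of a segment the condition \eqref{AveSlopeCond} is not vacuous but \emph{trivially satisfied}, since any particles that coincide there already shared a position at the start of the segment (by minimality of the next collision time), hence share a common velocity, and the mass average of a constant is that constant.

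The paper's proof is organized differently: it argues by induction on the number of particles $N$ rather than by induction on collision events. At the first collision time it merges the colliding subcollection into a single particle and applies the inductive hypothesis to the resulting system of strictly fewer particles, then glues the two time intervals together. Your time-forward construction and the paper's induction on $N$ are really two packagings of the same idea---both hinge on the observation that each collision strictly reduces the number of distinct positions, so the process terminates after at most $N-1$ events. Your version is arguably more transparent because it builds the trajectories directly and makes the termination count explicit; the paper's version has the minor advantage that the case of several simultaneous subcollections colliding at the same instant is absorbed into the inductive hypothesis rather than handled by the partition you introduce at $\tau_1$ (the paper in fact leaves that case to the reader as well).
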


\begin{proof}
We will argue by induction on $N$. For $N=2$, there are two cases.
The first is when $t\mapsto x_1+tv_1$ and $t\mapsto x_2+tv_2$ never intersect. In this scenario, we set 
\be\label{GammaLinei}
\gamma_i(t)=x_i+tv_i, \quad t\ge 0
\ee
for $i=1,2$. Otherwise, there is a first time $s\ge 0$ where the paths $t\mapsto x_i+tv_i$ intersect. In this case, we set
$$
\gamma_i(t):=
\begin{cases}
x_i+tv_i, \quad & t\in [0,s]\\
z+(t-s)\left(\displaystyle\frac{m_1v_1+m_2v_2}{m_1+m_2}\right), \quad & t\in [s,\infty)
\end{cases}
$$
where $z:=x_1+sv_1=x_2+sv_2$.

\par Now suppose the claim holds for some $N\ge 2$ and suppose $m_1,\dots,m_{N+1}>0$, $x_1,\dots, x_{N+1}\in \R$ and $v_1,\dots, v_{N+1}\in \R$ are given. If none of the paths \eqref{GammaLinei} intersect, then we define $\gamma_i$ by these linear trajectories for $i=1,\dots, N+1$. If there is at least one intersection, let $s\ge 0$ denote the first time that the trajectories \eqref{GammaLinei} intersect. Let us also initially assume that a single subcollection of trajectories intersect for the first time at time $s$
$$
z:=x_{i_1}+sv_{i_1}=\dots=x_{i_k}+sv_{i_k}
\neq x_i+sv_i \;\;\text{for}\;\; i\not\in\{i_1,\dots, i_k\}
$$
($k\ge 2$) and set
$$
v=\frac{m_{i_1}v_{i_1}+\dots +m_{i_k}v_{i_k}}{m_{i_1}+\dots +m_{i_k}}.
$$
\par Now consider the $N+1-(k-1)$ masses 
$$
\{m_i\}_{i\neq i_j}\quad\text{and}\quad m_{i_1}+\dots +m_{i_k},
$$
initial positions 
$$
\{x_i+sv_i\}_{i\neq i_j}\quad\text{and}\quad z,
$$
and initial velocities 
$$
\{v_i\}_{i\neq i_j}\quad\text{and}\quad v.
$$
By induction, this data gives rise to $N+1-(k-1)$ trajectories $\{\tilde\gamma_i\}_{i\neq i_j}$ and $\tilde\gamma$ from $[0,\infty)\rightarrow\R$, respectively, which satisfy the conclusion of this proposition.  We then set 
$$
\gamma_i(t)=
\begin{cases}
x_i+t v_i, \quad & t\in [0,s]\\
\tilde\gamma_i(t-s),\quad & t\in [s,\infty)
\end{cases}
$$
for $i\neq i_j$ and 
$$
\gamma_{i_j}(t)=
\begin{cases}
x_{i_j}+t v_{i_j}, \quad & t\in [0,s]\\
\tilde\gamma(t-s),\quad & t\in [s,\infty)
\end{cases}
$$
for $j=1,\dots, k$. It is immediate from construction that this collection of $N+1$ paths satisfies the desired properties. Finally, we note that a similar argument can be made in the case that more than one subcollection of \eqref{GammaLinei} intersect for the first time at $s$. We leave the details to the reader.  
\end{proof}
\begin{figure}[h]
\centering
 \includegraphics[width=.65\textwidth]{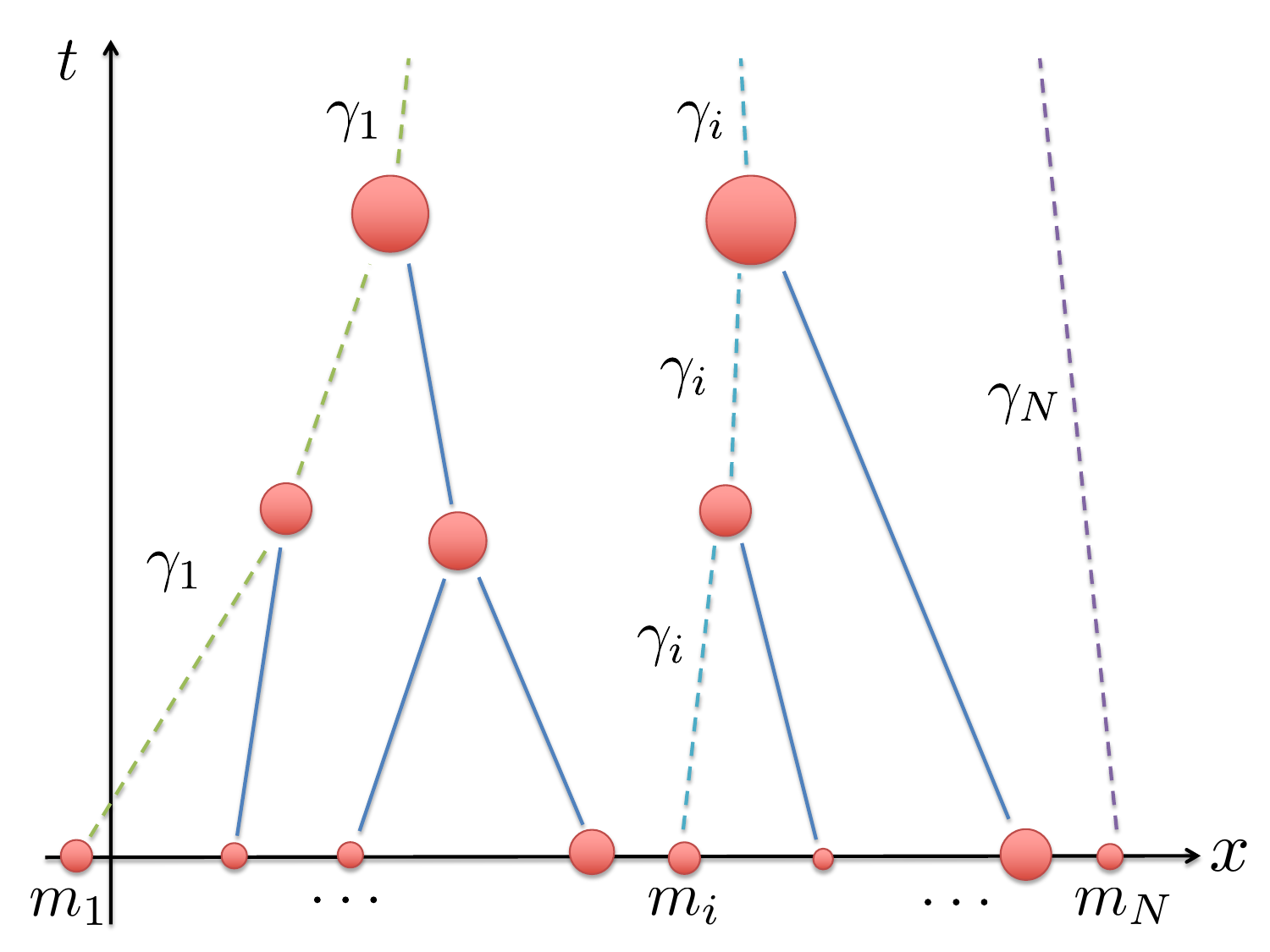}
 \caption{A schematic of the trajectories $\gamma_1,\dots, \gamma_N$ in $\R\times(0,\infty)$ which track the motion of the respective point masses labeled $m_1,\dots, m_N$.  The trajectories $\gamma_1,\gamma_i$ and $\gamma_N$ are shown with dashed line segments to highlight that they track $m_1, m_i$ and $m_N$, respectively.}\label{sevenMass}
\end{figure}

Any collection of trajectories  $\gamma_1,\dots, \gamma_N: [0,\infty)\rightarrow \R$ as specified in the conclusion of Proposition \ref{ExistGammai} 
are {\it sticky particle trajectories} associated with the respective masses $m_1,\dots,m_N$, initial positions $x_1,\dots, x_N\in \R$ and initial velocities $v_1,\dots, v_N$.
Moreover, we interpret $\gamma_i(t)$ as the location of point mass $m_i$ at time $t\ge 0$; this mass could be by itself or a part of a larger mass if it has collided with other particles prior to time $t$.  
The other two properties in the proposition represent the rules of inelastic collisions: particles stick together and their velocities average when they collide. See Figure \ref{sevenMass} for a schematic.

\par For the remainder of this section, we suppose that masses $m_1,\dots,m_N>0$ satisfy 
$$
\sum^N_{i=1}m_i=1,
$$
initial positions $x_1,\dots, x_N\in \R$ and initial velocities $v_1,\dots, v_N\in \R$ are given and fixed. We will denote $\gamma_1,\dots, \gamma_N$ as a corresponding collection of sticky particle trajectories and prove various important features of these paths.  The first of which is an averaging property. 

\begin{prop}\label{AveragingProp}
Assume $g:\R\rightarrow \R$. Then
\be\label{CondExpCondDiscrete}
\sum^N_{i=1}m_ig(\gamma_i(t)) \dot\gamma_i(t+)=\sum^N_{i=1}m_ig(\gamma_i(t)) \dot\gamma_i(s+)
\ee
for $0\le s\le t$. 
\end{prop}
\begin{proof}
If the none of the trajectories $\gamma_1,\dots, \gamma_N$ intersect, then $\dot\gamma_1,\dots, \dot\gamma_N$ are each constant and \eqref{CondExpCondDiscrete} trivially holds. Alternatively, 
some of the trajectories $\gamma_1,\dots, \gamma_N$ intersect and there are at most finitely many times when at least two of them agree for the first time. We will call these times {\it first intersection times} and use $0<t_1<\dots < t_\ell<\infty$ to denote this collection of times. We will also set $t_0=0$.  

\par As each $[0,\infty)\ni t\mapsto \dot\gamma_i(t+)$ is constant on the intervals $[t_0,t_1), [t_1,t_2),\dots$, $[t_{\ell-1},t_\ell),[t_\ell,\infty)$,  it suffices to show
\be\label{CondExpCondDiscreteSuffice2}
\sum^N_{i=1}m_ig(\gamma_i(t)) \dot \gamma_i(t_{r}+)=\sum^N_{i=1}m_ig(\gamma_i(t)) \dot \gamma_i(t_{k}+)
\ee
where $t_r$ is the largest of $t_0,\dots, t_\ell$ that is less than $t$ and $k=0,\dots, r$.  We will prove \eqref{CondExpCondDiscreteSuffice2} by induction. For $k=r$, \eqref{CondExpCondDiscreteSuffice2} is immediate. So we will assume that it holds for some $k\in\{1,\dots, r\}$ and then show how this assumption implies the assertion holds for $k-1$.   At time $t_{k}$, let us initially suppose that one sub-collection $\{\gamma_{i_j}\}^n_{j=1}\subset \{\gamma_{i}\}^N_{i=1}$ of paths intersect for the first time.  Observe that these trajectories also coincide at time $t$ as $t\ge t_k$; we will call this common path $\overline\gamma: [t_k,\infty)\rightarrow\R$.  We also note that 
$$
\dot\gamma_{i_j}(t_k+)=v_k:=\frac{m_{i_1}\dot\gamma_{i_1}(t_k-)+\dots+m_{i_n}\dot\gamma_{i_n}(t_k-)}{m_{i_1}+\dots+m_{i_n}}\quad 
$$
for $j=1,\dots, n$ and $\dot\gamma_i(t_k+)=\dot\gamma_i(t_{k-1}+)$ for $i\neq i_j$.

\par Taking these observations into account and the induction hypothesis, we find 
\begin{align*}
\sum^N_{i=1}m_ig(\gamma_i(t)) \dot \gamma_i(t_{r}+)&=\sum^N_{i=1}m_ig(\gamma_i(t))  \dot\gamma_i(t_{k}+)\\
&=\sum_{i\neq i_j}m_ig(\gamma_i(t))  \dot\gamma_i(t_{k}+)+\sum^n_{j=1}m_{i_j}g(\gamma_{i_j}(t))  \dot\gamma_{i_j}(t_{k}+)\\
&=\sum_{i\neq i_j}m_ig(\gamma_i(t)) \dot \gamma_i(t_{k-1}+)+\left(\sum^n_{j=1}m_{i_j}\right)g(\overline\gamma(t))  v_k\\
&=\sum_{i\neq i_j}m_ig(\gamma_i(t))  \dot\gamma_i(t_{k-1}+)+g(\overline\gamma(t))  \left(\sum^n_{j=1}m_{i_j}\right)v_k\\
&=\sum_{i\neq i_j}m_ig(\gamma_i(t))  \dot\gamma_i(t_{k-1}+)+g(\overline\gamma(t))  \sum^n_{j=1}m_{i_j}\dot\gamma_{i_j}(t_k-)\\
&=\sum_{i\neq i_j}m_ig(\gamma_i(t)) \dot \gamma_i(t_{k-1}+)+g(\overline\gamma(t))  \sum^n_{j=1}m_{i_j}\dot\gamma_{i_j}(t_{k-1}+)\\
&=\sum_{i\neq i_j}m_ig(\gamma_i(t)) \dot \gamma_i(t_{k-1}+)+  \sum^n_{j=1}m_{i_j}g(\gamma_{i_j}(t))\dot\gamma_{i_j}(t_{k-1}+)\\
&=\sum^N_{i=1}m_ig(\gamma_i(t))  \dot\gamma_i(t_{k-1}+).
\end{align*}
This argument is readily adapted to the case where more than one sub-collection of $\gamma_1,\dots,\gamma_N$ intersect for 
the first time at $t_k$. Therefore, we conclude \eqref{CondExpCondDiscreteSuffice2} and consequently \eqref{CondExpCondDiscrete}. 
\end{proof}
Using this averaging property, we can derive an elementary inequality involving the velocities $\dot\gamma_i$ at different times. To this end, 
we set 
\be\label{discreteVee}
v(x,t):=
\begin{cases}
\dot\gamma_i(t+), \quad &x=\gamma_i(t)\\
0,\quad &\text{otherwise}.
\end{cases}
\ee
In view of Proposition \ref{ExistGammai} part $(i)$, $\dot\gamma_i(t+)=\dot\gamma_j(t+)$ if $\gamma_i(t)=\gamma_j(t)$. As a result, $v: \R\times[0,\infty)\rightarrow \R$ is well defined. 
\begin{cor}\label{FconvexCor}
For $0\le s\le t$,
\be\label{FconvexCorIneq}
\frac{1}{2}\sum^N_{i=1}m_i\dot\gamma_i(t+)^2\le \frac{1}{2}\sum^N_{i=1}m_i\dot\gamma_i(s+)^2.
\ee
\end{cor}
\begin{proof}
Employing \eqref{discreteVee} and \eqref{CondExpCondDiscrete}, we find
\begin{align*}
\sum^N_{i=1}m_i\dot\gamma_i(t+)^2&=\sum^N_{i=1}m_i\dot\gamma_i(t+)v(\gamma_i(t),t)\\
&=\sum^N_{i=1}m_i\dot\gamma_i(s+)v(\gamma_i(t),t)\\
&=\sum^N_{i=1}m_i\dot\gamma_i(s+)\dot\gamma_i(t+)\\
&\le \sum^N_{i=1}m_i\left(\frac{1}{2}\dot\gamma_i(s+)^2+\frac{1}{2}\dot\gamma_i(t+)^2\right)\\
&=\frac{1}{2}\sum^N_{i=1}m_i\dot\gamma_i(s+)^2+\frac{1}{2}\sum^N_{i=1}m_i\dot\gamma_i(t+)^2.
\end{align*}
\end{proof}

\par The last property we will derive is the quantitative sticky particle property. It follows easily from the next assertion.  
\begin{prop}
For each $i,j\in \{1,\dots, N\}$ and $t>0$,
\be\label{EntropySimple}
(\dot\gamma_i(t+)-\dot\gamma_j(t+))(\gamma_i(t)-\gamma_j(t))\le \frac{1}{t}(\gamma_i(t)-\gamma_j(t))^2.
\ee
\end{prop}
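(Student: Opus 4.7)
The plan is to exploit the one-dimensional structure of sticky particles. For each time $t \ge 0$, group the particles into clusters by the equivalence $k \sim l$ if and only if $\gamma_k(t) = \gamma_l(t)$. Relabel so that $x_1 < x_2 < \dots < x_N$. Because trajectories are continuous and cannot cross in one dimension without merging (by the sticky particle property in the corollary to Proposition \ref{ExistGammai}), each cluster $C$ at time $t$ is a contiguous block of indices, and the clusters' positions at time $t$ are ordered in the same way as the index blocks.

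For each cluster $C$ at time $t$, write $M_C := \sum_{k \in C}m_k$, $\bar{x}_C := M_C^{-1}\sum_{k \in C}m_k x_k$, and $\bar{v}_C := M_C^{-1}\sum_{k \in C}m_k v_k$. I will establish
\begin{equation*}
\gamma_i(t) \;=\; \bar{x}_C + t\bar{v}_C, \qquad \dot\gamma_i(t) \;=\; \bar{v}_C \qquad (i \in C).
\end{equation*}
The velocity identity follows from iterating the averaging rule \eqref{AveSlopeCond}; equivalently one can apply \eqref{CondExpCondDiscrete} with $s=0$ and $g$ the indicator of $\{\gamma_i(t)\}$. For the position identity, the key observation is that no particle in $C$ can have collided on $[0,t]$ with a trajectory outside $C$, for such an outsider would remain merged with the cluster at time $t$ and so belong to $C$. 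Hence the $C$-particles form a closed subsystem on $[0, t]$, and momentum conservation at each internal collision gives $M_C^{-1}\sum_{k \in C}m_k\gamma_k(s) = \bar{x}_C + s\bar{v}_C$ for all $s \in [0, t]$; evaluating at $s=t$ yields the claim since all $\gamma_k(t)$ with $k \in C$ coincide.

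Given these formulas, fix $i, j$ with $\gamma_i(t) > \gamma_j(t)$ and let $C, D$ be their clusters. Subtracting,
\begin{equation*}
t\bigl(\dot\gamma_i(t) - \dot\gamma_j(t)\bigr) \;=\; \bigl(\gamma_i(t) - \gamma_j(t)\bigr) - \bigl(\bar{x}_C - \bar{x}_D\bigr) \;\le\; \gamma_i(t) - \gamma_j(t),
\end{equation*}
since $\bar{x}_C \ge \bar{x}_D$ by the first paragraph. Multiplying by $\gamma_i(t) - \gamma_j(t) > 0$ produces \eqref{EntropySimple}. When $\gamma_i(t) = \gamma_j(t)$, the sticky property forces $\dot\gamma_i(t) = \dot\gamma_j(t)$ and both sides of \eqref{EntropySimple} vanish, while the case $\gamma_i(t) < \gamma_j(t)$ is symmetric. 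The main technical point is the position identity: the ``closed subsystem'' observation is immediate, but its upgrade to the centre-of-mass formula across the (possibly many) internal collisions of $C$ on $[0,t]$ is handled by a short induction on the collision times of the finite system.
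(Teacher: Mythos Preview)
Your proof is correct and takes a genuinely different route from the paper's. The paper argues interval-by-interval between collision times: on each $(t_{k-1},t_k]$ it writes $\gamma_i(t)=a_i+tw_i$, splits into the two cases $(a_i-a_j)(w_i-w_j)<0$ and $(a_i-a_j)(w_i-w_j)\ge 0$, and in each case verifies the inequality by a direct algebraic computation (using, in the first case, that the linear pieces would meet no earlier than $t_k$).

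Your argument instead identifies the linear piece through time $t$ with the center-of-mass data of the time-$t$ cluster: $\gamma_i(t)=\bar{x}_C+t\bar{v}_C$ and $\dot\gamma_i(t)=\bar{v}_C$. The order-preservation step (clusters are contiguous index blocks, ordered at time $t$ as at time $0$) then gives $\bar{x}_C\ge \bar{x}_D$ whenever $\gamma_i(t)>\gamma_j(t)$, and the inequality drops out in one line. In effect you prove that the ``extrapolated time-$0$ intercept'' $a_i$ in the paper's notation is exactly $\bar{x}_C$, and that these intercepts are ordered the same way as the positions at time $t$; this renders the paper's case split unnecessary. What your approach buys is a more structural, conceptually transparent argument; what the paper's approach buys is that it stays purely at the level of linear pieces and does not need to invoke the cluster/center-of-mass formula or the contiguity of clusters. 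Both are short, and both are valid.

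One small point worth making explicit in your write-up: the relabeling $x_1<\dots<x_N$ tacitly assumes distinct initial positions. If some coincide, those particles are already in the same cluster at $t=0$ by the sticky property and can be merged, so the assumption is harmless---but it is worth a sentence.
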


\begin{proof}
Set $t_0=0$, and suppose $t_1< \dots < t_\ell<\infty$ are the possible first intersection times of the trajectories $\gamma_1,\dots, \gamma_N$.  
We will focus on an interval $(t_{k-1},t_k)$ where no collisions occur. Between these intersection times, all trajectories are linear so
$$
\gamma_i(t)=a_i+tw_i\quad \text{and}\quad \gamma_i(t)=a_j+tw_j
$$
for $t\in (t_{k-1},t_k)$, some $a_i,a_j,w_i, w_j\in \R$.   Without loss of generality, we will assume that $\gamma_i(t)\neq \gamma_j(t)$ for $t\in (t_{k-1},t_k)$.  

\par If
\be\label{nonMonEnt}
(a_i-a_j)(w_i-w_j)<0,
\ee
then the linear paths $a_i+tw_i$ and $a_j+tw_j$ will eventually intersect. By our assumption, $t_k$ must be less 
than or equal to this intersection time. That is, 
$$
t_k\le -\frac{(a_i-a_j)(w_i-w_j)}{(w_i-w_j)^2}.
$$
As a result,
\begin{align*}
(\dot\gamma_i(t)-\dot\gamma_j(t))(\gamma_i(t)-\gamma_j(t))&=(w_i-w_j)(a_i+tw_i-(a_j+tw_j))\\
&=(w_i-w_j)(a_i-a_j)+t(w_i-w_j)^2\\
&\le (w_i-w_j)(a_i-a_j)+t_k(w_i-w_j)^2\\
&\le 0
\end{align*}
for $t\in (t_{k-1},t_k)$.  

\par  Alternatively, if \eqref{nonMonEnt} does not hold, then
\begin{align*}
(\dot\gamma_i(t)-\dot\gamma_j(t))(\gamma_i(t)-\gamma_j(t))&=(w_i-w_j)(a_i-a_j)+t(w_i-w_j)^2\\
&\le 2(w_i-w_j)(a_i-a_j)+t(w_i-w_j)^2\\
&\le \frac{1}{t}(a_i-a_j)^2+2(w_i-w_j)(a_i-a_j)+t(w_i-w_j)^2\\
&=\frac{1}{t}(a_i-a_j+t(w_i-w_j))^2\\
&=\frac{1}{t}(a_i + tw_i-(a_j+tw_j))^2\\
&=\frac{1}{t}(\gamma_i(t)-\gamma_j(t))^2.
\end{align*}
Thus \eqref{EntropySimple} holds for all $t\in (t_{k-1},t_k)$. It is also not hard to see the argument above implies that  \eqref{EntropySimple} holds for $(t_\ell,\infty)$, as well. Taking 
limits in \eqref{EntropySimple} as $t\searrow t_k$ for $k=1,\dots, \ell$, we conclude that it actually holds for all $t>0$. 
\end{proof}
\begin{cor}\label{QSPPcor}
For each $i,j\in \{1,\dots, N\}$ and $0<s\le t<\infty$,
\be
\frac{1}{t}|\gamma_i(t)-\gamma_j(t)|\le \frac{1}{s}|\gamma_i(s)-\gamma_j(s)|.
\ee
\end{cor}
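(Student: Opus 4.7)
The plan is to deduce this directly from the entropy inequality \eqref{EntropySimple} of the preceding proposition. Set $r(t) := \gamma_i(t) - \gamma_j(t)$ and $\psi(t) := r(t)^2/t^2$ for $t > 0$. Since the claimed inequality is equivalent to asserting that $\psi$ is non-increasing on $(0,\infty)$, the goal reduces to showing $\psi'(t) \le 0$ wherever $\psi$ is differentiable and that $\psi$ is continuous at the exceptional points.

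On any open interval on which neither $\gamma_i$ nor $\gamma_j$ undergoes a collision, both trajectories are linear, the right derivatives coincide with the ordinary derivatives, and $\psi$ is smooth. A direct computation yields
\[
\psi'(t) \;=\; \frac{2\,r(t)\dot r(t)}{t^2} - \frac{2\,r(t)^2}{t^3} \;=\; \frac{2}{t^3}\bigl[\, t\,r(t)\dot r(t) - r(t)^2 \,\bigr].
\]
Because $r(t)\dot r(t) = (\dot\gamma_i(t) - \dot\gamma_j(t))(\gamma_i(t) - \gamma_j(t))$, the entropy estimate \eqref{EntropySimple} immediately gives $\psi'(t) \le 0$ on each such interval.

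Since the trajectories $\gamma_i$ and $\gamma_j$ are continuous on $[0,\infty)$, so is $\psi$ on $(0,\infty)$; the finitely many collision times of the system are the only points at which $\psi$ can fail to be smooth. Piecewise monotonicity on the linear subintervals combined with continuity across those finitely many exceptional points assembles into global monotonicity of $\psi$ on $(0,\infty)$, and taking square roots yields the quantitative sticky particle property. The only point requiring any care is that \eqref{EntropySimple} is phrased in terms of right derivatives while the argument above uses standard calculus on the open intervals; but on the interior of each linear piece the two notions of derivative agree, so I anticipate no real obstacle.
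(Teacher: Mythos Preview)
Your argument is correct and is essentially the same as the paper's: both differentiate $t\mapsto |\gamma_i(t)-\gamma_j(t)|^2/t^2$ and invoke the entropy inequality \eqref{EntropySimple} to conclude this function is nonincreasing. You are simply a bit more explicit than the paper about the piecewise-linear structure and continuity across the finitely many collision times.
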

\begin{proof}
Observe
\begin{align*}
\frac{d}{dt}\frac{1}{2}(\gamma_i(t)-\gamma_j(t))^2 &= (\gamma_i(t)-\gamma_j(t))(\dot\gamma_i(t)-\dot\gamma_j(t))\\
&= (\gamma_i(t)-\gamma_j(t))(v(\gamma_i(t),t)-v(\gamma_j(t),t))\\
&\le \frac{1}{t}(\gamma_i(t)-\gamma_j(t))^2
\end{align*}
for almost every $t>0$.  As a result, 
\begin{align*}
\frac{d}{dt}\frac{1}{t^2}(\gamma_i(t)-\gamma_j(t))^2&=\frac{1}{t^2}\frac{d}{dt}(\gamma_i(t)-\gamma_j(t))^2
-\frac{2}{t^3}(\gamma_i(t)-\gamma_j(t))^2\\
&\le \frac{2}{t^3}(\gamma_i(t)-\gamma_j(t))^2-\frac{2}{t^3}(\gamma_i(t)-\gamma_j(t))^2\\
&=0.
\end{align*}
\end{proof}
We can summarize these properties and relate them to Proposition \ref{EtaThm} as described below. 

\begin{prop}\label{DiscreteEtaLemma}
Define 
\be\label{EtaDiscrete}
\eta=\sum^N_{i=1}m_i\delta_{\gamma_i}\in {\cal P}(\Gamma)
\ee
and $\rho_0=\sum^{N}_{i=1}m_i\delta_{x_i}$, and suppose $v_0:\R\rightarrow \R$ satisfies 
$$
v_0(x_i)=v_i, \;\;\text{for}\;\; i=1,\dots, N.
$$
Then the following assertions hold. 
\begin{enumerate}[(i)]

\item $\rho_0=e_0{_\#}\eta$.

\item For each $0<s\le t$ and $\gamma,\xi\in \textup{supp}(\eta)$,  
\be
\frac{1}{t}|\gamma(t)-\xi(t)|\le \frac{1}{s}|\gamma(s)-\xi(s)|.
\ee

\item For each $\gamma\in \textup{supp}(\eta)$, $\gamma: [0,\infty)\rightarrow \R$ is continuous and piecewise linear. 

\item Define $v:\R\times[0,\infty)\rightarrow \R$ via \eqref{discreteVee}.  For $\gamma\in \textup{supp}(\eta)$,
\be
\dot\gamma(t+)=v(\gamma(t),t),\quad t\ge 0.
\ee

\item For $h:\R\rightarrow \R$ and all but finitely many $t>0$
\be
\int_{\Gamma}\dot\gamma(t)h(\gamma(t))d\eta(\gamma)=\int_{\Gamma}v_0(\gamma(0))h(\gamma(t))d\eta(\gamma).
\ee
 
\item For all $0\le s\le t<\infty$,
$$
\int_{\R}\frac{1}{2}\dot\gamma(t+)^2d\eta(\gamma)\le \int_{\R}\frac{1}{2}\dot\gamma(s+)^2d\eta(\gamma).
$$
\end{enumerate}
\end{prop}

\begin{proof}
For $f\in C_b(\R)$, 
$$
\int_{\R}f(x)d\rho_0(x)=\sum^N_{i=1}m_if(x_i)=\sum^N_{i=1}m_if(\gamma_i(0))=\int_{\Gamma}f(\gamma(0))d\eta(\gamma).
$$
This proves $(i)$.  As $\textup{supp}(\eta)=\{\gamma_1,\dots,\gamma_N\}$, $(ii)$ follows from Corollary \ref{QSPPcor}. Part $(iii)$ 
is due to Proposition \ref{ExistGammai}, $(iv)$ is immediate from the definition of \eqref{discreteVee}, $(v)$ 
follows from Proposition \ref{AveragingProp} (with $s=0$), and $(vi)$ is a consequence of Corollary \ref{FconvexCor}.  
\end{proof}

\section{Proof of Proposition \ref{EtaThm}}\label{ExistSec}
This section is devoted to the proof of Proposition  \ref{EtaThm}.  To this end, we let $\rho_0\in {\cal P}(\R)$ and suppose $v_0:\R\rightarrow \R$ is continuous with 
$$
\int_{\R}v_0^2d\rho_0<\infty. 
$$
By Lemma \ref{ApproxLem}, there is a sequence $(\rho^k_0)_{k\in \N}$ such that each $\rho^k_0\in {\cal P}(\R)$ is a convex combination of Dirac measures, $\rho^k_0\rightarrow \rho_0$ narrowly, and 
\be\label{vzeroSquareBound}
\lim_{k\rightarrow \infty}\int_{\R}v_0^2d\rho^k_0=\int_{\R}v_0^2d\rho_0.
\ee
We also recall that since $\rho^k_0\rightarrow \rho_0$ narrowly, there is a function $\theta:\R\rightarrow [0,\infty)$ with compact sublevel sets for which 
\be\label{thetaFun}
\sup_{k\in \N}\int_{\R}\theta d\rho^k_0<\infty
\ee
(Remark 5.1.5 of \cite{AGS}).

\par As $\rho^k_0$ is a convex combination of Dirac measures, Proposition \ref{DiscreteEtaLemma} implies there is $\eta^k\in {\cal P}(\Gamma)$ which satisfies:
 
\begin{enumerate}[$(a)$]

\item $\rho^k_0=e_0{_\#}\eta^k$.

\item For each $0<s\le t$ and $\gamma,\xi\in \textup{supp}(\eta^k)$,  
\be\label{QSPPkay}
\frac{1}{t}|\gamma(t)-\xi(t)|\le \frac{1}{s}|\gamma(s)-\xi(s)|.
\ee

\item For $h:\R\rightarrow \R$ and all but finitely many $t>0$
\be\label{AveragingStepKay}
\int_{\Gamma}\dot\gamma(t)h(\gamma(t))d\eta^k(\gamma)=\int_{\Gamma}v_0(\gamma(0))h(\gamma(t))d\eta^k(\gamma).
\ee
 
\item For all but finitely many $t\ge 0$,
\be\label{KineticBound}
\int_{\Gamma}\dot\gamma(t)^2d\eta^k(\gamma)\le \int_{\R}v_0^2d\rho^k_0.
\ee
\end{enumerate}

\par We will show that $(\eta^k)_{k\in \N}$ has a convergent subsequence.
\begin{lem}
There is a subsequence $(\eta^{k_j})_{j\in \N}$ and $\eta^\infty\in {\cal P}(\Gamma)$ such that 
\be\label{etaConv}
\lim_{j\rightarrow\infty}\int_{\Gamma}{\cal F}(\gamma)d\eta^{k_j}(\gamma)=\int_{\Gamma}{\cal F}(\gamma)d\eta^\infty(\gamma)
\ee
for each bounded, continuous ${\cal F}:\Gamma\rightarrow \R$.  Moreover, 
\be\label{UnifIntegrability}
\lim_{R\rightarrow\infty}\int_{|\gamma(t_2)-\gamma(t_1)|\ge R}|\gamma(t_2)-\gamma(t_1)|d\eta^k(\gamma)=0
\ee
for each $t_1, t_2\ge 0$ and
 \be\label{UnifIntegrability2}
\lim_{R\rightarrow\infty}\int_{|v_0(\gamma(0))|\ge R}|v_0(\gamma(0))|d\eta^k(\gamma)=0
\ee
uniformly in $k\in \N$.
\end{lem}
\begin{proof}
1. First we define 
\be\label{SpaceX}
X:=\left\{\gamma\in \Gamma \;|\; \gamma: [0,\infty)\rightarrow \R \;\text{absolutely continuous and}\; \int^n_0\dot\gamma(t)^2dt<\infty\;\text{for all $n\in \N$}\right\}
\ee
and set 
\be\label{PhiFunction}
\Phi(\gamma)
:=
\begin{cases}
\theta(\gamma(0))+\displaystyle\sum_{n=1}\frac{1}{2^n}\int^n_0\dot\gamma(t)^2dt,\quad & \gamma\in X\\
+\infty, \quad & \gamma\not\in X.
\end{cases}
\ee
Using \eqref{KineticBound} gives 
\begin{align}\label{PhiEstUpper}
\int_{\Gamma}\Phi(\gamma)d\eta^k(\gamma)&=\int_{\Gamma}\left[\theta(\gamma(0))+\displaystyle\sum_{n=1}\frac{1}{2^n}\int^n_0\dot\gamma(t)^2dt\right]d\eta^k(\gamma)\nonumber \\
&=\int_{\Gamma}\theta(\gamma(0))d\eta^k(\gamma)+\sum_{n=1}\frac{1}{2^n}\int_{\Gamma}\left(\int^n_0\dot\gamma(t)^2dt\right)d\eta^k(\gamma)\nonumber\\
&=\int_{\R}\theta d\rho^k_0+\sum_{n=1}\frac{1}{2^n}\int^n_0\left(\int_{\Gamma}\dot\gamma(t)^2d\eta^k(\gamma)\right)dt\nonumber\\
&\le \int_{\R}\theta d\rho^k_0+\sum_{n=1}\frac{1}{2^n}\int^n_0\left(\int_{\R}v_0^2d\rho^k_0\right)dt\nonumber\\
&= \int_{\R}\theta d\rho^k_0+\sum_{n=1}\frac{n}{2^n}\int_{\R}v_0^2d\rho^k_0\nonumber\\
&= \int_{\R}\left(\theta +2v_0^2\right)\rho^k_0.
\end{align}

\par In view of \eqref{vzeroSquareBound} and \eqref{thetaFun}, 
$$
\sup_{k\in\N}\int_{\Gamma}\Phi(\gamma)d\eta^k(\gamma)<\infty.
$$
By the Arzel\`a-Ascoli theorem, the sublevel sets of $\Phi$ are compact within $\Gamma$. Here we recall that $\Gamma$ is a complete, separable metric space when equipped with the distance
$$
d(\gamma,\zeta):=\sum_{n\in \N}\frac{1}{2^n}\left(\frac{\displaystyle\max_{0\le t\le n}|\gamma(t)-\zeta(t)|}{1+\displaystyle\max_{0\le t\le n}|\gamma(t)-\zeta(t)|}\right)\quad (\gamma,\xi\in \Gamma)
$$
(Proposition A.2 of \cite{MR3302526}). As a result, Prokhorov's theorem (Theorem 5.1.3 in \cite{AGS}) asserts that $(\eta^k)_{k\in \N}$ has a narrowly convergent subsequence. That is, there is a subsequence $(\eta^{k_j})_{j\in \N}$ and $\eta^\infty\in {\cal P}(\Gamma)$ such that  \eqref{etaConv} holds.

\par  2.  For $t_1\le t_2$,
\begin{align*}
\int_{|\gamma(t_2)-\gamma(t_1)|\ge R}|\gamma(t_2)-\gamma(t_1)|d\eta^k(\gamma)&\le 
\frac{1}{R}\int_{\Gamma}|\gamma(t_2)-\gamma(t_1)|^2d\eta^k(\gamma)\\
&\le \frac{t_2-t_1}{R}\int_{\Gamma}\left(\int^{t_2}_{t_1}\dot\gamma(t)^2dt\right)d\eta^k(\gamma)\\
&= \frac{t_2-t_1}{R}\int^{t_2}_{t_1}\left(\int_{\Gamma}\dot\gamma(t)^2d\eta^k(\gamma)\right)dt\\
&\le \frac{(t_2-t_1)^2}{R}\int_{\R}v_0^2d\rho_0^k.
\end{align*}
Again appealing to \eqref{vzeroSquareBound}, we conclude that the limit \eqref{UnifIntegrability} is uniform in $k\in \N$. 
Likewise
\begin{align*}
\int_{|v_0(\gamma(0))|\ge R}|v_0(\gamma(0))|d\eta^k(\gamma)\le \frac{1}{R}\int_{\Gamma}v_0(\gamma(0))^2d\eta^k(\gamma)
=\frac{1}{R}\int_{\R}v_0^2d\rho^k_0\rightarrow 0
\end{align*}
as $R\rightarrow\infty$ uniformly in $k\in \N$. 
\end{proof}

\begin{proof}[Proof of Proposition  \ref{EtaThm}]
We will now show $\eta^\infty$ satisfies conditions $(i)-(vi)$ in the statement of Proposition \ref{EtaThm}. 
\newline
\newline
\noindent \underline{Proof of $(i)$}: As $e_0: \Gamma\rightarrow\R$ is continuous, it follows from the narrow convergence of $\eta^{k_j}\rightarrow\eta^\infty$ in ${\cal P}(\Gamma)$ that
$$
\rho_0=\lim_{j\rightarrow\infty}e_0{_\#}\eta^{k_j}=e_0{_\#}\eta^{\infty}.
$$
\newline
\newline
\noindent \underline{Proof of $(ii)$}: Suppose $\gamma,\xi\in\textup{supp}(\eta^\infty)$. Then there are sequences $(\gamma^{j})_{j\in \N}$ and $(\xi^{j})_{j\in \N}$ 
such that $\gamma^{j},\xi^j\in\textup{supp}(\eta^{k_j})$ for all $j\in \N$ and $\gamma^j\rightarrow \gamma$ and $\xi^j\rightarrow \xi$ in $\Gamma$ (Lemma 5.1.8 of \cite{AGS}). Combining with \eqref{QSPPkay}, we have
 \begin{align*}
\frac{1}{t}|\gamma(t)-\xi(t)|&= \lim_{j\rightarrow\infty}\frac{1}{t}|\gamma^j(t)-\xi^j(t)|\\
 &\le \lim_{j\rightarrow\infty}\frac{1}{s}|\gamma^j(s)-\xi^j(s)|\\
 &= \frac{1}{t}|\gamma(s)-\xi(s)|
 \end{align*}
 for $0<s\le t$.  
 \newline
 \newline
\noindent \underline{Proof of $(iii)$}:  Recall that $\Phi:\Gamma\rightarrow [0,\infty]$ defined in \eqref{PhiFunction} has compact sublevel sets and is thus lower semicontinuous. By narrow convergence and 
\eqref{PhiEstUpper}, 
\be\label{PhiEstUpper2}
\int_\Gamma\Phi(\gamma)d\eta^\infty(\gamma)\le
\liminf_{j\rightarrow\infty}\int_\Gamma\Phi(\gamma)d\eta^{k_j}(\gamma)\le\liminf_{j\rightarrow\infty}\int_{\R}\left(\theta +2v_0^2\right)d\rho_0^{k_j}<\infty.
\ee
In particular, $\Phi(\gamma)<\infty$ for $\eta^\infty$ almost every $\gamma\in \Gamma$. As a result, $\gamma:[0,\infty)\rightarrow \R$ is absolutely continuous for $\eta^\infty$ almost every $\gamma\in \Gamma$.
 \newline
 \newline
\noindent \underline{Proof of $(iv)$}: For each $(x,t,s)\in \R\times(0,\infty)\times (0,\infty)$ with $s\le t$, define
\be\label{TransitionFun}
f(x,t,s):=\inf\left\{\xi(t)+\frac{t}{s}|x-\xi(s)|: \xi\in \text{supp}(\eta^\infty)\right\}.
\ee
If $x=\gamma(s)$ with $\gamma\in \text{supp}(\eta^\infty)$ and $s\le t$, we can choose $\xi=\gamma$ in the above infimum to get $f(\gamma(s),t,s)\le \gamma(t)$.  By part $(ii)$ of this proof,
$\xi(t)+\frac{t}{s}|\gamma(s)-\xi(s)|\ge \gamma(t)$ for all $\xi\in \text{supp}(\eta^\infty)$. It follows that
$$
f(\gamma(s),t,s)=\gamma(t)
$$
for $\gamma\in \text{supp}(\eta^\infty)$ and $s\le t$.  

\par For $n\in \N$, set
\be
v_{n}(x,t):=
n(f(x,t+1/n,t)-x)
\ee
for $(x,t)\in \R\times (0,\infty)$. As $f$ is upper semicontinuous, $v_{n}$ is Borel measurable. Moreover, 
$$
v_{n}(\gamma(t),t)=n(\gamma(t+1/n)-\gamma(t))
$$
for $\gamma\in \text{supp}(\eta^\infty)$ and $t>0$.

\par It is routine to verify that
$$
{\cal D}=\left\{(\gamma,t)\in \Gamma\times(0,\infty): \dot\gamma(t)\;\; \text{exists}\right\}
$$
is a Borel subset of $\Gamma\times(0,\infty)$. Furthermore, 
\be
D(\gamma,t)=
\begin{cases}
\displaystyle \dot\gamma(t), \quad &(\gamma,t)\in {\cal D}\\
0,\quad & \text{otherwise}
\end{cases}
\ee
is Borel measurable as $\dot\gamma(t)=\lim_{n\rightarrow\infty}n\left(\gamma(t+1/n)-\gamma(t)\right)$ for $(\gamma,t)\in {\cal D}$. We also set 
$$
\Upsilon={\cal D}\cap (\text{supp}(\eta^\infty)\times(0,\infty))
$$
and note that the collection of Borel subsets of $\Upsilon$ is
$$
\left\{\Upsilon\cap {\cal A}: \;\text{Borel}\;{\cal A}\subset \Gamma\times (0,\infty)\right\}.
$$

\par Observe that for every $(\gamma,t)\in \Upsilon$,  
\begin{align*}
D(\gamma,t)&=\dot\gamma(t)\\
&=\lim_{n\rightarrow \infty}n\left(\gamma(t+1/n)-\gamma(t)\right)\\
&=\lim_{n\rightarrow \infty}v_n(\gamma(t),t)\\
&=\lim_{n\rightarrow \infty}v_n\circ E(\gamma,t).
\end{align*}
Here
$$
E:\Gamma\times(0,\infty)\rightarrow \R\times(0,\infty); (\gamma,t)\mapsto (\gamma(t),t)
$$
is continuous, so
$$
{\cal G}:=\left\{\Upsilon \cap E^{-1}(B):\; \textup{Borel}\; B\subset \R\times(0,\infty)\right\}
$$
is a sub-sigma-algebra of the Borel subsets of $\Upsilon$.  In particular, any ${\cal G}$ measurable function on $\Upsilon$ is of the form $g\circ E$ for some Borel $g:\R\times(0,\infty)\rightarrow \R$ (Lemma 1.13 \cite{MR1876169}). 

\par Since $D$ restricted to $\Upsilon$ is the pointwise limit of ${\cal G}$
measurable functions, it is ${\cal G}$ measurable (Proposition 2.7 of \cite{Folland}, Lemma 1.10 of \cite{MR1876169}). That is, 
\be
D|_{\Upsilon}=v\circ E
\ee
for a Borel $v: \R\times(0,\infty)\rightarrow\R$. In particular, for $\gamma\in\text{supp}(\eta^\infty)\cap \{\Phi<\infty\}$ 
\be\label{veeInfinityODE}
\dot\gamma(t)=v(\gamma(t),t)\;\; \text{a.e.}\; t>0.
\ee
\newline
 \newline
\noindent \underline{Proof of $(v)$}:  Suppose $h\in C_b(\R)$ and set $g(z)=\int^z_0h(w)dw$. For $t_1\le t_2$, 
 \begin{align*}
 \int^{t_2}_{t_1}\int_\Gamma \dot\gamma(t)h(\gamma(t))d\eta^{k_j}(\gamma)dt
 &=\int_\Gamma \left( \int^{t_2}_{t_1}\frac{d}{dt}g(\gamma(t))dt\right)d\eta^{k_j}(\gamma)\\
 &=\int_\Gamma \left(g(\gamma(t_2))-g(\gamma(t_1))\right)d\eta^{k_j}(\gamma).
 \end{align*}
 Also note that $\gamma\mapsto g(\gamma(t_2))-g(\gamma(t_1))$ is continuous on $\Gamma$ and 
 $$
 |g(\gamma(t_2))-g(\gamma(t_1)|\le\|h\|_\infty |\gamma(t_2)-\gamma(t_1)|.
 $$
 The previous lemma asserts that $\gamma\mapsto |\gamma(t_2)-\gamma(t_1)|$ is uniformly integrable, so
 \begin{align*}
 \lim_{j\rightarrow\infty} \int^{t_2}_{t_1}\int_\Gamma \dot\gamma(t)h(\gamma(t))d\eta^{k_j}(\gamma)dt
& =  \lim_{j\rightarrow\infty} \int_\Gamma \left(g(\gamma(t_2))-g(\gamma(t_1))\right)d\eta^{k_j}(\gamma)\\
&=\int_\Gamma \left(g(\gamma(t_2))-g(\gamma(t_1))\right)d\eta^\infty(\gamma)\\
&=\int^{t_2}_{t_1}\int_\Gamma \dot\gamma(t)h(\gamma(t))d\eta^\infty(\gamma)dt
 \end{align*}
 (Lemma 5.1.7 of \cite{AGS}).
 \par We also note that 
 $$
\gamma\mapsto \int^{t_2}_{t_1}v_0(\gamma(0))h(\gamma(t))dt
 $$
 is continuous on $\Gamma$ and 
 $$
\left| \int^{t_2}_{t_1} v_0(\gamma(0))h(\gamma(t))dt\right| \le \|h\|_\infty (t_2-t_1)|v_0(\gamma(0))|.
 $$
 As $\gamma\mapsto |v_0(\gamma(0))|$ is uniformly integrable, 
 \begin{align*}
 \lim_{j\rightarrow\infty} \int^{t_2}_{t_1}\int_\Gamma v_0(\gamma(0))h(\gamma(t))d\eta^{k_j}(\gamma)dt&=
  \lim_{j\rightarrow\infty}\int_\Gamma \left(\int^{t_2}_{t_1}v_0(\gamma(0))h(\gamma(t))dt\right)d\eta^{k_j}(\gamma)\\
  &= \int_\Gamma \left(\int^{t_2}_{t_1}v_0(\gamma(0))h(\gamma(t))dt\right)d\eta^{\infty}(\gamma)\\
  &=\int^{t_2}_{t_1}\int_\Gamma v_0(\gamma(0))h(\gamma(t))d\eta^{\infty}(\gamma)dt.
 \end{align*}
 Thus we can integrate \eqref{AveragingStepKay} from $t_1$ to $t_2$ and  send $k=k_j\rightarrow\infty$ to conclude
\be
 \int^{t_2}_{t_1}\int_\Gamma \dot\gamma(t)h(\gamma(t))d\eta^\infty(\gamma)dt=\int^{t_2}_{t_1}\int_\Gamma v_0(\gamma(0))h(\gamma(t))d\eta^{\infty}(\gamma)dt.
\ee
Since $t_1, t_2$ are arbitrary, this proves part $(v)$. 
 \newline
\newline
\noindent \underline{Proof of $(vi)$}: By \eqref{PhiEstUpper2}, 
\begin{align*}
\int^n_0\left(\int_{\Gamma}\dot\gamma(t)^2d\eta^\infty(\gamma)\right)dt\le 2^n\int_{\Gamma}\Phi(\gamma)d\eta^\infty(\gamma)
<\infty
\end{align*}
for all $n\in \N$.  As a result, 
$$
\int_{\Gamma}v(\gamma(t),t)^2d\eta^\infty(\gamma)=\int_{\Gamma}\dot\gamma(t)^2d\eta^\infty(\gamma)<\infty
$$
for almost every $t>0$.  We can also use part $(v)$ of this theorem and the function $f$ defined in \eqref{TransitionFun} to find
\begin{align*}
\int_{\Gamma}\dot\gamma(t)h(\gamma(t))d\eta^\infty(\gamma)&=\int_{\Gamma}v_0(\gamma(0))h(\gamma(t))d\eta^\infty(\gamma)\\
&=\int_{\Gamma}v_0(\gamma(0))h(f(\gamma(s),t,s))d\eta^\infty(\gamma)\\
&=\int_{\Gamma}\dot\gamma(s)h(f(\gamma(s),t,s))d\eta^\infty(\gamma)\\
&=\int_{\Gamma}\dot\gamma(s)h(\gamma(t))d\eta^\infty(\gamma)
\end{align*}
for almost every $t,s\in [0,\infty)$ with $s\le t$ and $h\in C_b(\R)$. 

\par By approximation, we also have 
\be
\int_{\Gamma}\dot\gamma(t)h(\gamma(t))d\eta^\infty(\gamma)=\int_{\Gamma}\dot\gamma(s)h(\gamma(t))d\eta^\infty(\gamma)
\ee
for almost every $t,s\in [0,\infty)$ with $s\le t$ and each Borel $h:\R\rightarrow \R $ with 
$$
\int_\Gamma h(\gamma(t))^2d\eta^\infty(\gamma)<\infty.
$$
See for instance Theorem 7.9 in \cite{Folland}. Consequently, 
\begin{align*}
\int_{\Gamma}\dot\gamma(t)^2d\eta^\infty(\gamma)&=\int_{\Gamma}\dot\gamma(t)v(\gamma(t),t)d\eta^\infty(\gamma)\\
&=\int_{\Gamma}\dot\gamma(s)v(\gamma(t),t)d\eta^\infty(\gamma)\\
&=\int_{\Gamma}\dot\gamma(s)\dot\gamma(t)d\eta^\infty(\gamma)\\
&\le \frac{1}{2}\int_{\Gamma}\dot\gamma(s)^2d\eta^\infty(\gamma)+\frac{1}{2}\int_{\Gamma}\dot\gamma(t)^2d\eta^\infty(\gamma)
\end{align*}
for almost every $s\le t$. 
 \end{proof}

\section{Solution of the SPS}\label{SolnSec}
We will now show how to use a measure $\eta\in{\cal P}(\Gamma)$ from Proposition \ref{EtaThm} to generate a solution of the SPS for given initial conditions. 
\begin{proof}[Proof of Theorem \ref{ExistTheorem}]
Let $\eta^\infty\in{\cal P}(\Gamma)$ be the probability measure we constructed in our proof of Proposition \ref{EtaThm}. Recall that
$\eta^\infty$ fulfills conditions $(i)-(vi)$ in the statement of Proposition \ref{EtaThm}, which we will refer to as $(i)-(vi)$ throughout this proof.  We set 
$$
\rho: (0,\infty)\rightarrow {\cal P}(\R); t\mapsto e_t{_\#}\eta^\infty
$$
and proceed to show that $\rho$ and the function $v: \R\times (0,\infty)\rightarrow \R$ from part $(iv)$ is the desired weak solution pair. 

\par 1. Let $\psi\in C^\infty_c(\R\times[0,\infty))$. In view of conditions $(i)$, $(iii)$ and $(iv)$, 
\begin{align*}
\int^\infty_0\int_{\R}(\partial_t\psi+v\partial_x\psi )d\rho_tdt
&=\int^\infty_0 \int_{\Gamma}(\partial_t\psi(\gamma(t),t)+ v(\gamma(t),t)\partial_x\psi(\gamma(t),t))d\eta^\infty(\gamma)dt\\
&=\int^\infty_0 \int_{\Gamma}(\partial_t\psi(\gamma(t),t)+ \dot\gamma(t)\partial_x\psi(\gamma(t),t))d\eta^\infty(\gamma)dt\\
&=\int_{\Gamma}\int^\infty_0\frac{d}{dt}\psi(\gamma(t),t)dt d\eta^\infty(\gamma)\\
&=-\int_{\Gamma}\psi(\gamma(0),0)d\eta^\infty(\gamma)\\
&=-\int_{\R}\psi(\cdot,0)d\rho_0.
\end{align*}
We also have by condition $(v)$, 
\begin{align*}
\int^\infty_0\int_{\R}(v\partial_t\psi+v^2\partial_x\psi )d\rho_tdt
&=\int^\infty_0 \int_{\Gamma}\dot\gamma(t)(\partial_t\psi(\gamma(t),t)+ v(\gamma(t),t)\partial_x\psi(\gamma(t),t))d\eta^\infty(\gamma)dt\\
&=\int^\infty_0 \int_{\Gamma}v_0(\gamma(0))(\partial_t\psi(\gamma(t),t)+ v(\gamma(t),t)\partial_x\psi(\gamma(t),t))d\eta^\infty(\gamma)dt\\
&=\int^\infty_0 \int_{\Gamma}v_0(\gamma(0))(\partial_t\psi(\gamma(t),t)+ \dot\gamma(t)\partial_x\psi(\gamma(t),t))d\eta^\infty(\gamma)dt\\
&=\int^\infty_0 \int_{\Gamma}v_0(\gamma(0))\frac{d}{dt}\psi(\gamma(t),t)d\eta^\infty(\gamma)dt\\
&=\int_{\Gamma}v_0(\gamma(0))\left(\int^\infty_0 \frac{d}{dt}\psi(\gamma(t),t)dt\right)d\eta^\infty(\gamma)\\
&=-\int_{\Gamma}v_0(\gamma(0))\psi(\gamma(0),0)d\eta^\infty(\gamma)\\
&=-\int_{\R}v_0\psi(\cdot,0)d\rho_0.
\end{align*}
Consequently, $\rho$ and $v$ is a weak solution pair which satisfies $\rho|_{t=0}=\rho_0$ and $v|_{t=0}=v_0$.

\par 2. By part $(ii)$ and \eqref{veeInfinityODE},  
\be\label{PreEntropy}
\frac{d}{dt}\frac{(\gamma(t)-\xi(t))^2}{t^2}=\frac{2}{t^2}\left((v(\gamma(t),t)-v(\xi(t),t))(\gamma(t)-\xi(t))-\frac{1}{t}(\gamma(t)-\xi(t))^2\right)
\le 0
\ee
for each $\gamma,\xi\in \text{supp}(\eta^\infty)\cap \{\Phi<\infty\}$ and almost every $t>0$.  Here $\Phi$ was specified in \eqref{PhiFunction}, 
and we recall that $$\eta^\infty(\{\Phi<\infty\})=1,$$ which followed from \eqref{PhiEstUpper2}.

\par We also note
$$
e_t(\{\Phi<\infty\}\cap \text{supp}(\eta^\infty))=\bigcup_{m\in\N}\{\gamma(t)\in \R: \gamma\in \text{supp}(\eta^\infty),\; \Phi(\gamma)\le m\}
$$
is a Borel subset of $\R$ since $\{\gamma(t)\in \R: \gamma\in \text{supp}(\eta^\infty),\; \Phi(\gamma)\le m\}$ is compact for each $m\in \N$. 
Moreover,
\begin{align*}
\rho_t(e_t(\{\Phi<\infty\}\cap \text{supp}(\eta^\infty)))&=\eta^\infty\left(e_t^{-1}\left(e_t(\{\Phi<\infty\}\cap \text{supp}(\eta^\infty))\right)\right)\\
&\ge \eta^\infty\left(\{\Phi<\infty\}\cap \text{supp}(\eta^\infty)\right)\\
&=1.
\end{align*}
In view of \eqref{PreEntropy}, 
\be
(v(x,t)-v(y,t))(x-y)\le \frac{1}{t}(x-y)^2
\ee
holds for almost every $t>0$ and for $\rho_t$ almost every $x,y\in\R$.

\par 3. By $(iv)$ and $(v)$,  
$$
\int_{\R}v(x,t)^2d\rho_t(x)=\int_{\Gamma}\dot\gamma(t)^2d\eta^\infty(\gamma)\le \int_{\Gamma}v_0(\gamma(0))^2d\eta^\infty(\gamma)
=\int_{\R}v_0^2d\rho_0<\infty
$$
for almost every $t>0$. Employing part $(vi)$, we also find
\begin{align*}
\int_{\R}\frac{1}{2}v(x,t)^2d\rho_t(x)&=\int_{\Gamma}\frac{1}{2}v(\gamma(t),t)^2d\eta^\infty(\gamma)\\
&=\int_{\Gamma}\frac{1}{2}\dot\gamma(t)^2d\eta^\infty(\gamma)\\
&\le\int_{\Gamma}\frac{1}{2}\dot\gamma(s)^2d\eta^\infty(\gamma)\\
&= \int_{\R}\frac{1}{2}v(x,s)^2d\rho_s(x)\\
\end{align*}
for almost every $0<s\le t$. 
\end{proof}

\appendix

\section{Approximation lemma}
This is a variation of a standard method used to show that ${\cal P}(\R)$ is separable. See for the instance Proposition 4.4 of the notes by Onno \cite{Onno}. The main point is the 
 function $g$ is not assumed to be bounded. 
\begin{lem}\label{ApproxLem}
Suppose $\mu\in {\cal P}(\R)$ and $g:\R\rightarrow [0,\infty)$ is continuous with 
$$
\int_{\R}g(x)d\mu(x)<\infty. 
$$
There is a sequence $(\mu^k)_{k\in \N}$ for which each $\mu^k\in {\cal P}(\R)$ is a convex combination of 
Dirac measures, $\mu^k\rightarrow \mu$ narrowly and 
\be\label{strongMetricConv}
\int_{\R}g(x)d\mu(x)=\lim_{k\rightarrow \infty}\int_{\R}g(x)d\mu^k(x).
\ee
\end{lem}
\begin{proof}
1.  Fix $\epsilon\in (0,1)$ and choose $R>0$ so large that 
\be\label{IntegrabilityGee}
\int_{\R\setminus[-R,R)}(1+g(x))d\mu(x)\le\epsilon. 
\ee
As $g|_{[-R,R]}$ is uniformly continuous, there is $\delta\in (0,\epsilon)$ such that 
$$
|g(x)-g(y)|\le \epsilon
$$
provided $x,y\in [-R,R]$ and $|x-y|\le \delta.$  Let us also select a natural number $N$ for which 
$$
\frac{2R}{N}\le\delta.
$$

\par In addition, we set
$$
y_j:=-R+j\left(\frac{2R}{N}\right)\quad j=0,\dots,N
$$ 
and choose any $x_j\in [y_{j-1},y_j)$ for $j=1,\dots, N$. Moreover, we may select $z\not\in [-R,R)$ such that 
$$
\inf_{\R\setminus[-R,R)}g\ge g(z)-\epsilon. 
$$ 
Now define
$$
m_j:=\mu([y_{j-1},y_j))\quad j=0,\dots,N
$$
and 
$$
m:=\sum_{j=1}m_j=\mu([-R,R)).
$$
Finally, we set
$$
\nu:=\sum^N_{j=1}m_j\delta_{x_j}+(1-m)\delta_z
$$
and note $\nu\in {\cal P}(\R)$ is a convex combination of Dirac measures. 

\par 2. Suppose $f: \R\rightarrow \R$ satisfies  $|f(x)|\le 1$ and $|f(x)-f(y)|\le |x-y|$ for each $x,y\in \R$.  As $f$ is continuous, there are 
$z_j\in [y_{j-1},y_j)$ with 
$$
f(z_j)m_j=\int_{[y_{j-1},y_j)}f(x)d\mu(x)
$$
for $j=1,\dots, N$.  Observe 
\begin{align*}
\left|\int_{\R}fd\nu -\int_{\R}fd\mu \right|&=\left|\sum^N_{j=1}m_jf(x_j)+(1-m)f(z) -\int_{\R\setminus[-R,R)}fd\mu-\int_{[-R,R)}fd\mu \right|\\
&\le \left|\sum^N_{j=1}m_jf(x_j)-\int_{[-R,R)}fd\mu \right|+(1-m)|f(z)| +\int_{\R\setminus[-R,R)}|f|d\mu\\
&\le \left|\sum^N_{j=1}\left(m_jf(x_j)-\int_{[y_{j-1},y_j)}fd\mu\right) \right|+(1-m) +\mu(\R\setminus[-R,R))\\
&\le \sum^N_{j=1}m_j|f(x_j)-f(z_j)| + 2\mu(\R\setminus[-R,R))\\
&\le \sum^N_{j=1}m_j|x_j-z_j| + 2\epsilon\\
&\le \sum^N_{j=1}m_j\delta+ 2\epsilon\\
&\le 3\epsilon.
\end{align*}

3. We may also choose $w_j\in [y_{j-1},y_j)$ such that 
$$
g(w_j)m_j=\int_{[y_{j-1},y_j)}g(x)d\mu(x)
$$
for $j=1,\dots, N$. Doing so gives 
\begin{align*}
\left|\int_{\R}gd\nu -\int_{\R}gd\mu \right|&=\left|\sum^N_{j=1}m_jg(x_j)+(1-m)g(z) -\int_{\R\setminus[-R,R)}gd\mu-\int_{[-R,R)}gd\mu \right|\\
&=\left|\sum^N_{j=1}\left(m_jg(x_j)-\int_{[y_{j-1},y_j)}gd\mu\right)+(1-m)g(z)-\int_{\R\setminus[-R,R)}gd\mu \right|\\
&\le \sum^N_{j=1}m_j|g(x_j)-g(w_j)|+\left|\mu(\R\setminus[-R,R))g(z)-\int_{\R\setminus[-R,R)}gd\mu \right|\\
&\le \sum^N_{j=1}m_j\cdot \epsilon+\left|\mu(\R\setminus[-R,R))g(z)-\int_{\R\setminus[-R,R)}gd\mu \right|\\
&\le \epsilon+\left|\mu(\R\setminus[-R,R))g(z)-\int_{\R\setminus[-R,R)}gd\mu \right|.
\end{align*}
As $g\ge 0$ and by our assumption \eqref{IntegrabilityGee},
$$
\mu(\R\setminus[-R,R))g(z)-\int_{\R\setminus[-R,R)}gd\mu\ge -\epsilon. 
$$
And by our choice of $z$, 
\begin{align*}
\mu(\R\setminus[-R,R))g(z)-\int_{\R\setminus[-R,R)}gd\mu&\le \mu(\R\setminus[-R,R))\left(\inf_{\R\setminus[-R,R)}g+\epsilon\right)-\int_{\R\setminus[-R,R)}gd\mu\\
&\le \mu(\R\setminus[-R,R))\inf_{\R\setminus[-R,R)}g-\int_{\R\setminus[-R,R)}gd\mu+\epsilon^2\\
&\le \epsilon^2\\
&\le \epsilon.
\end{align*}
Thus, 
$$
\left|\int_{\R}gd\nu -\int_{\R}gd\mu \right|\le 2\epsilon. 
$$

\par 4. For each $k\in \N$, we can then find $\nu=\mu^k\in{\cal P}(\R)$ which is a convex combination of Dirac measures and 
$$
\left|\int_{\R}fd\mu^k -\int_{\R}fd\mu \right|\le\frac{1}{k}
$$ 
for each $k\in \N$ and $f: \R\rightarrow \R$ with  $|f(x)|\le 1$ and $|f(x)-f(y)|\le |x-y|$ for $x,y\in \R$. It is well known that this type of convergence implies that $\mu^k\rightarrow \mu$ narrowly (Remark 5.1.1 in \cite{AGS}). Since each $\mu^k$ can be chosen so that  
 $$
\left|\int_{\R}gd\mu^k -\int_{\R}gd\mu \right|\le\frac{1}{k} \quad (k\in \N),
$$ 
we conclude \eqref{strongMetricConv}, as well. 
\end{proof}

\bibliography{1DSPS}{}

\begin{thebibliography}{10}

\bibitem{AGS}
Luigi Ambrosio, Nicola Gigli, and Giuseppe Savar\'e.
\newblock {\em Gradient flows in metric spaces and in the space of probability
  measures}.
\newblock Lectures in Mathematics ETH Z\"urich. Birkh\"auser Verlag, Basel,
  second edition, 2008.

\bibitem{BreGre}
Yann Brenier and Emmanuel Grenier.
\newblock Sticky particles and scalar conservation laws.
\newblock {\em SIAM J. Numer. Anal.}, 35(6):2317--2328, 1998.

\bibitem{MR3296602}
Fabio Cavalletti, Marc Sedjro, and Michael Westdickenberg.
\newblock A simple proof of global existence for the 1{D} pressureless gas
  dynamics equations.
\newblock {\em SIAM J. Math. Anal.}, 47(1):66--79, 2015.

\bibitem{Dermoune}
Azzouz Dermoune.
\newblock Probabilistic interpretation of sticky particle model.
\newblock {\em Ann. Probab.}, 27(3):1357--1367, 1999.

\bibitem{ERykovSinai}
Weinan E, Yu.~G. Rykov, and Ya.~G. Sinai.
\newblock Generalized variational principles, global weak solutions and
  behavior with random initial data for systems of conservation laws arising in
  adhesion particle dynamics.
\newblock {\em Comm. Math. Phys.}, 177(2):349--380, 1996.

\bibitem{Folland}
Gerald~B. Folland.
\newblock {\em Real analysis}.
\newblock Pure and Applied Mathematics (New York). John Wiley \& Sons, Inc.,
  New York, second edition, 1999.
\newblock Modern techniques and their applications, A Wiley-Interscience
  Publication.

\bibitem{MR1853866}
Feimin Huang and Zhen Wang.
\newblock Well posedness for pressureless flow.
\newblock {\em Comm. Math. Phys.}, 222(1):117--146, 2001.

\bibitem{MR3302526}
Ryan Hynd and Hwa~Kil Kim.
\newblock Infinite horizon value functions in the {W}asserstein spaces.
\newblock {\em J. Differential Equations}, 258(6):1933--1966, 2015.

\bibitem{MR1876169}
Olav Kallenberg.
\newblock {\em Foundations of modern probability}.
\newblock Probability and its Applications (New York). Springer-Verlag, New
  York, second edition, 2002.

\bibitem{MR2418013}
Octave Moutsinga.
\newblock Convex hulls, sticky particle dynamics and pressure-less gas system.
\newblock {\em Ann. Math. Blaise Pascal}, 15(1):57--80, 2008.

\bibitem{NatSav}
Luca Natile and Giuseppe Savar\'e.
\newblock A {W}asserstein approach to the one-dimensional sticky particle
  system.
\newblock {\em SIAM J. Math. Anal.}, 41(4):1340--1365, 2009.

\bibitem{MR2438785}
Truyen Nguyen and Adrian Tudorascu.
\newblock Pressureless {E}uler/{E}uler-{P}oisson systems via adhesion dynamics
  and scalar conservation laws.
\newblock {\em SIAM J. Math. Anal.}, 40(2):754--775, 2008.

\bibitem{MR3359159}
Truyen Nguyen and Adrian Tudorascu.
\newblock One-dimensional pressureless gas systems with/without viscosity.
\newblock {\em Comm. Partial Differential Equations}, 40(9):1619--1665, 2015.

\bibitem{Onno}
Onno van Gaans.
\newblock {\em Probability measures on metric spaces}, 2019 (accessed September
  14, 2019).
\newblock \url{www.math.leidenuniv.nl/~vangaans/jancol1.pdf}.

\bibitem{Zeldovich}
{\relax Yakov}~B. Zel'dovich.
\newblock {Gravitational instability: An Approximate theory for large density
  perturbations}.
\newblock {\em Astron. Astrophys.}, 5:84--89, 1970.

\end{thebibliography}
\bibliographystyle{plain}

\typeout{get arXiv to do 4 passes: Label(s) may have changed. Rerun}

\end{document}